\providecommand{\abs}[1]{\left\lvert#1\right\rvert}
\providecommand{\norm}[1]{\left\Vert#1\right\Vert}
\newcommand{\sumfinie}{\sum_{i=N+1}^{\mathcal N}}
\newcommand{\pscal}[1]{\langle #1 \rangle}
\newcommand{\im}{\text{Im }}
\newcommand{\Var}{\mathbf{Var}}
\newcommand{\E}{\mathbf{E}}
\newcommand{\Emu}{\mathbf{E}_\mu}
\newcommand{\Dmu}{(\mu)}
\newcommand{\Ddmu}{(\mu,\Phi)}
\newcommand{\Dddmu}{(\mu,N,\Phi)}
\def \R{\mathbb{R}}
\def \N{\mathbb{N}}
\def \E{\mathbb{E}}
\def \Var{\hbox{{\rm Var}}}
\newtheorem{theorem}{Theorem}[section]
\newtheorem{cor}[theorem]{Corollary}
\newcommand*\samethanks[1][\value{footnote}]{\footnotemark[#1]}
\newcommand\remove[1]{}
\newcommand\add[1]{#1}
\newcommand\rcancel[1]{#1}
\newenvironment{keywords}{}{}
\newenvironment{AMS}{}{}
\title{Goal-oriented error estimation for \add{the} reduced basis method, with application to \remove{certified} sensitivity analysis}
\author{Alexandre Janon\thanks{Laboratoire de Math\'ematiques d'Orsay, Universit\'e Paris-Sud (France)} \and Ma\"elle Nodet\thanks{Laboratoire Jean Kuntzmann, Universit\'e Joseph Fourier, INRIA/MOISE} \and Cl\'ementine Prieur\samethanks[2]}
\begin{document}

\maketitle
\begin{abstract}
The reduced basis method is a powerful model reduction technique designed to speed up the computation of multiple numerical solutions of parametrized partial differential equations. We consider a quantity of interest, which is a linear functional of the PDE solution.  A new probabilistic error bound for the reduced model is proposed. It is efficiently and explicitly computable, and we show on different examples that this error bound is sharper than existing ones. We include application of our work to sensitivity analysis studies. 
\end{abstract}

\begin{keywords}
Keywords: reduced basis method, surrogate model, reduced order modelling, response surface method, scientific computation, sensitivity analysis, Sobol index computation, Monte-Carlo method
\end{keywords}

\begin{AMS}
AMS MSC: 65M15
\end{AMS}

\pagestyle{myheadings}
\thispagestyle{plain}
\markboth{JANON A., NODET M., PRIEUR C.}{GOAL-ORIENTED ERROR ESTIMATION}

\section*{Introduction}

A large number of mathematical models are based on partial differential equations (PDEs). These models require input data (e.g., the physical features of the considered system, the geometry of the domain, the external forces...) which enter in the PDE as \emph{parameters}. In many applications (for instance, design optimization, data assimilation, or uncertainty quantification), one has to numerically compute the solution of a parametrized partial differential equation for a large number of values of the parameters. In such a case, it is generally interesting, in terms of computation time, to perform all possible parameter-independent computations in an \emph{offline} phase, which is done only once, and to call an \emph{online} phase for each required value of the parameter, during which the information gathered in the offline phase can be used to speed up the computation of an approximate solution of the PDE, and, hence, to reduce the marginal (ie., per parameter) computation cost.

The reduced basis method \cite{nguyen2005certified} is a way of specifying such offline and online phases, which has been \remove{successully} \add{successfully} applied to various well-known PDEs \cite{grepl2005posteriori,knezevic2010reduced,veroy2005certified,janon2011certified}. One should note that, in the reduced basis (RB) method, the online phase does not compute a solution which is strictly identical to the numerical PDE solution, but an approximation of it, obtained by projecting the original discretized equations onto a well-chosen basis. In the application cases given above, however, one is not interested in the solution by itself, but rather in a \emph{quantity of interest}, or model \emph{output}, which is a functional of this solution. Taking this functional into account when performing the model reduction leads to a so-called \emph{goal-oriented} method. For instance, goal-oriented basis choice procedures have been tried with success in the context of dynamical systems in \cite{willcox2002balanced,ilak2008modeling}, where the basis is chosen so as to contain the modes that are relevant to accurately represent the output of interest, and in a general context in \cite{bui2007goal}, where the basis is chosen so as to minimize the overall output error. All those papers showed that using an adapted basis could lead to a great improvement of reduction error. 

This paper is about goal-oriented error estimation, that is, the description of a rigorous and computable  \emph{error bound} between the model output and the reduced one. Two different reduced model outputs can be considered: the first (which we call \emph{uncorrected} reduced output) is simply the output functional evaluated at the reduced output. The second (called \emph{corrected} reduced output), described in \cite{machiels1998general,Maday2002533,nguyen2005certified}, is the same, up to a correction term obtained from the solution of an auxiliary (dual) problem. The rate of convergence of the corrected output is better than the uncorrected one but the computation of the correction involves the application of the RB method to the dual problem, and this has generally the drawback of doubling offline \emph{and online} computational times. Regarding output error estimation, an error bound for the difference between the corrected reduced output and the original output is provided in the papers cited above. In this paper, we propose two new goal-oriented error bounds: one for the uncorrected reduced output, and one for the corrected reduced output. We also show, in numerical examples, that our bound is more precise than the existing bound.

This paper is organized as follows: in the first part, we describe our output error bounds and explain how to compute them; in the second part, we see how to apply our error bound to \remove{certified} sensitivity analysis studies; finally, the third and fourth parts present numerical applications.

\section{Methodology}
\subsection{Preliminaries}
\paragraph{Reference problem}
We begin by setting up the context of the reduced basis method for affine-parametrized linear partial differential equations presented in \cite{nguyen2005certified}. Our reference problem is the following: given a parameter tuple $\mu \in \mathcal P \subset \R^p$, and the vector space $X=\R^\mathcal N$ (for $\mathcal N\in\N$), find $u(\mu)$, the solution of:
\begin{equation}\label{e:refprob} A(\mu) u(\mu) =  f(\mu), \end{equation}
where $A(\mu)$ is an invertible square matrix of dimension $\mathcal N$, and $f(\mu) \in X$, then compute the \emph{output}:
\begin{equation}\label{e:out} s(\mu)=S(u(\mu)) \end{equation}
where $S: X \rightarrow \R$ is a linear form on $X$.
\medskip

\emph{Choice of the inner product:\hspace*{.25cm}}We suppose that $X=\R^\mathcal N$ (with the standard basis), is endowed with the standard Euclidean inner product: $\pscal{u,v}=u^t v$, with associated norm $\norm{u}=\sqrt{\pscal{u,u}}$
\medskip

\emph{Affine decomposition hypothesis:\hspace*{.25cm}}We suppose that $A(\mu)$ and $f(\mu)$ admit the following so-called affine decomposition \cite{nguyen2005certified}:
\begin{equation}\label{e:affdec} \forall \mu\in\mathcal P, \;\; A(\mu)=\sum_{q=1}^Q\Theta_q(\mu)A_q, \;\;
f(\mu)=\sum_{q'=1}^{Q'} \gamma_{q'}(\mu) f_{q'} \end{equation}
where $Q, Q'\in\N^*$,  $\Theta_q:\mathcal P\rightarrow\R$ and $\gamma_{q'}:\mathcal P \rightarrow \R$ (for $q=1,\ldots,Q$, $q'=1,\ldots,Q'$) are \remove{smooth} \add{given} functions, $A_q$ are square matrices of dimension $\dim X$ and $f_{q'}\in X$.

This hypothesis is required by the reduced basis method.

\paragraph{Reduced basis method}

The dimension of the finite element subspace $\dim X$ is generally fairly large, so that the numerical computation of $u(\mu)$ from the inversion of $A(\mu)$ is expensive. The reduced basis aims at speeding up ``many queries'', that is, the computation of $u(\mu)$ for all parameters $\mu\in\mathcal P_0$ where $\mathcal P_0$ is a finite but ``large'' subset of the parameter set $\mathcal P$. 
\medskip

\emph{Reduced problem: \hspace*{.25cm}} We consider a subspace $\widetilde X$ of $X$, and a matrix $Z$ whose columns are the components of a basis of $\widetilde X$ in a basis of $X$. This basis of $\widetilde X$ is called the \emph{reduced basis} in the sequel. We denote by $\widetilde u(\mu)$ the components, in the reduced basis, of the solution of the projection of \eqref{e:refprob} onto $\widetilde X$, that is, the solution of:
\begin{equation}\label{e:redprob} Z^t A Z \widetilde u(\mu) = Z^t f(\mu) \end{equation}
(where, for any matrix $M$, $M^t$ is the transpose of $M$).
\medskip

\emph{Choice of the reduced subspace:\hspace*{.25cm}} There are different techniques for choosing the reduced basis (the $Z$ matrix). This paper does not focus on this topic, but we cite the POD method (Proper orthogonal decomposition) \cite{sirovich1987turbulence}, and the Greedy method \cite{nguyen2005certified}. 
\medskip

\emph{Offline-online decomposition: \hspace*{.25cm}} The many-query computation can then be split into two parts: the first part (usually called the ``offline phase''), which is done only once, begins by finding a reduced subspace, then the $Q$ parameter-independent matrices:
\[ \widetilde A_q = Z^t A_q Z, \;\;q=1,\ldots,Q \]
and the $Q'$ vectors:
\[ \widetilde f_{q'}=Z^t f_{q'}, \;\; q'=1,\ldots,Q' \]
are computed and stored. In the second part (the ``online phase''), we compute, for each value of the parameter $\mu$:
\begin{equation}\label{e:rb1}  \widetilde A(\mu) = \sum_{q=1}^Q \Theta_q(\mu) \widetilde A_q, \;\;
\widetilde f(\mu) = \sum_{q'=1}^{Q'} \gamma_q(\mu) \widetilde f_{q'} \end{equation}
and solve for $\widetilde u(\mu)$ satisfying:
\begin{equation}\label{e:rb2}  \widetilde A(\mu)\widetilde u(\mu)=\widetilde f(\mu). \end{equation}
The key point is that the operations in \eqref{e:rb1} and \eqref{e:rb2} are performed on vectors and matrices of size $\dim \widetilde X$, and that the complexity of these operations is totally independent from the dimension of the underlying ``truth'' subspace $X$. In many cases, the smoothness of the map $\mu\mapsto u(\mu)$ allows to find (in a constructive way, ie., compute) $\widetilde X$ so that $\dim\widetilde X \ll \dim X$ while keeping $\norm{u(\mu)-Z \widetilde u(\mu)}$ small, hence enabling significant computational savings. 
\medskip

\emph{Output approximation:\hspace*{.25cm}} The output $s(\mu)$ can also be approximated from $\widetilde u(\mu)$ using an efficient offline-online procedure: let $l\in X$ be so that:
\[ s(u(\mu))=\pscal{l,u} \;\;\forall u\in X; \]
in the offline phase we compute and store:
\[ \widetilde l=Z^t l \]
and in the online phase we take:
\[ \widetilde s(\mu)=\pscal{\widetilde l, \widetilde u(\mu)} \]
as an approximation for $s(\mu)$.

\paragraph{Reduced-basis error bounds\medskip\\*}
\hspace*{-.35cm}\emph{Bound on $u$:\hspace*{.25cm}}
Under additional coercivity hypothesis on $A$, the reduced basis method \cite{nguyen2005certified} also provides an efficient offline-online procedure for computing $\epsilon^u(\mu)$ so that the approximation can be \emph{certified}. This bound is based on the \emph{dual norm of the residual}:
\[ \rho(\mu) = \add{\norm{r(\mu)}_{RB} = }\norm{ A(\mu) Z \widetilde u(\mu) - f(\mu) }_{RB} \]
where $\norm{\cdot}_{RB}$ is a suitably chosen norm on $X$ (not necessarily $\norm{\cdot}$), and a \emph{stability constant} bound, which can be written as:
\begin{equation}\label{e:stabconst} 0 < \alpha(\mu) \leq \inf_{v \in X, \norm{v}_{RB}=1} \abs{v^t A(\mu) v} \end{equation}
when $A$ is symmetric. The inequality sign is due to the fact that the exact infimum can be costly to evaluate in the online stage; usually a procedure such as the successive constraints method \cite{huynh2007successive} is used in order to quickly find a lower bound.

The bound $\epsilon^u(\mu)$ reads:
\[ \forall \mu\in\mathcal P\;\; \norm{u(\mu)-Z \widetilde u(\mu)}_{RB} \leq \frac{\rho(\mu)}{\alpha(\mu)} := \epsilon^u(\mu), \]
The online procedure for the computation of $\epsilon(\mu)$ is also of complexity independent of $\dim X$.
\medskip

\emph{Lipschitz bound on $s$:\hspace*{.25cm}}
This online error bound can in turn be used to provide a certification on the output:
\begin{equation}\label{e:bornaive}\forall \mu\in\mathcal P\;\;  \abs{s(\mu)-\widetilde s(\mu)} \leq \underbrace{\norm{l}_{RB} \epsilon^u(\mu)}_{=:\epsilon^L(\mu)}\end{equation}
We call this bound the ``Lipschitz'' bound, and denote it by $\epsilon^L(\mu)$. \add{It is well-known that this bound is very pessimistic.}

\bigskip

The aim of Section \ref{ss:errdecomp} is to bound $\abs{s(\mu)-\widetilde s(\mu)}$ by a quantity which is smaller than $\epsilon^L(\mu)$ of \eqref{e:bornaive} and can be computed using an efficient offline-online procedure which does not require computation of $\epsilon^u(\mu)$, described in Section \ref{ss:montecarloapprox}. In Section \ref{ss:boundcorrected}, we consider a better approximation of $s(\mu)$ (denoted by $\widetilde s_c(\mu)$) which also depends on the solution of the adjoint equation of \eqref{e:refprob} projected on a suitably selected dual reduced basis, and we see how the proposed bound for $\abs{s(\mu)-\widetilde s(\mu)}$ can be modified in order to bound $\abs{s(\mu)-\widetilde s_c(\mu)}$. 

\subsection{Probabilistic error bound}
\label{ss:errdecomp}
In this section, we give the expression of our output error bound. We \remove{begin with some notation: {let's} denote the residual by} \add{recall the notation for the residual} $r(\mu)$:
\[ r(\mu) = A(\mu) Z \widetilde u(\mu) - f(\mu) \in X,  \]
and the adjoint problem solution (which will naturally appear in the proof of Theorem \ref{t:1}) by $w(\mu)$:
\[ w(\mu) = A(\mu)^{-t} l. \]
Let, for any orthonormal basis $\Phi = \{ \phi_1,\ldots, \phi_{\mathcal N} \}$ of $X$, any $N\in\N^*$, and $i=1,\ldots,N$,
\[ D_i(\mu,\Phi)=\pscal{w(\mu), \phi_i}. \]
We take a partition $\{\mathcal P_1, \ldots, \mathcal P_K\}$ of the parameter space $\mathcal P$, that is:
\[ \mathcal P = \cup_{k=1}^K \mathcal P_k \;\;\;\text{and}\;\;\; k\neq k' \remove{\rightarrow} \add{\Rightarrow}\mathcal P_k \cap \mathcal P_{k'}=\emptyset. \]
We set, for $i=1,\ldots,N$ and $k=1,\ldots,K$:
\[ \beta_{i,k}^{min}(\Phi) = \min_{\mu\in\mathcal P_k} D_i(\mu\add{,\Phi}), \;\;\; \beta_{i,k}^{max}(\Phi) = \max_{\mu\in\mathcal P_k} D_i(\mu\add{,\Phi}), \]
and:
\[ \beta_i^{up}\Ddmu = \left\{ \begin{array}{l} \beta_{i,k(\mu)}^{max}(\Phi) \text{ if } \pscal{r\Dmu,\phi_i}>0 \\ \beta_{i,k(\mu)}^{min}(\Phi) \text{ else, } \end{array} \right. \]
\[ \beta_i^{low}\Ddmu = \left\{ \begin{array}{l} \beta_{i,k(\mu)}^{min}(\Phi) \text{ if } \pscal{r\Dmu,\phi_i}>0 \\ \beta_{i,k(\mu)}^{max}(\Phi) \text{ else, } \end{array} \right. \]
where $k(\mu)$ is the only $k$ in $\{1,\ldots,K\}$ so that $\mu\in\mathcal P_k$.
We also set:
\[ T_1^{low}\Dddmu = \sum_{i=1}^N \pscal{r\Dmu, \phi_i} \beta_i^{low}\Ddmu, \;\;
   T_1^{up}\Dddmu = \sum_{i=1}^N \pscal{r\Dmu, \phi_i} \beta_i^{up}\Ddmu, \]
\[ T_1\Dddmu=\max\left( \abs{T_1^{low}\Dddmu}, \abs{T_1^{up}\Dddmu} \right). \]
Finally, we suppose that $\mu$ is a random variable on $\mathcal P$ and set:
\[ T_2(N,\Phi)=\Emu\left(\abs{ \sumfinie \pscal{w(\mu), \phi_i} \pscal{r(\mu),\phi_i} } \right). \]

We have the following theorem:
\begin{theorem} \label{t:1}
	For any $\alpha\in]0;1[$ and for any $N\in\N^*$, we have:
	\[ P\left(\abs{s(\mu)-\widetilde s(\mu)} > T_1\Dddmu + \frac{T_2(N,\Phi)}{\alpha} \right) \leq \alpha. \]
\end{theorem}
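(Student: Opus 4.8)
The plan is to reduce the theorem to a single application of Markov's inequality, after rewriting the exact output error and splitting it into a head controlled deterministically for every $\mu$ and a tail controlled only in expectation. First I would derive the adjoint error representation. Since $\widetilde s(\mu)=\pscal{\widetilde l,\widetilde u(\mu)}=\pscal{Z^t l,\widetilde u(\mu)}=\pscal{l,Z\widetilde u(\mu)}$, the error is $s(\mu)-\widetilde s(\mu)=\pscal{l,u(\mu)-Z\widetilde u(\mu)}$; writing $u(\mu)=A(\mu)^{-1}f(\mu)$ and inserting $r(\mu)=A(\mu)Z\widetilde u(\mu)-f(\mu)$ gives $u(\mu)-Z\widetilde u(\mu)=-A(\mu)^{-1}r(\mu)$, so that, using $w(\mu)=A(\mu)^{-t}l$,
\[ s(\mu)-\widetilde s(\mu)=-\pscal{l,A(\mu)^{-1}r(\mu)}=-\pscal{w(\mu),r(\mu)}. \]
This is the only step where the structure of the reference problem enters.

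Next, since $\Phi$ is an orthonormal basis of $X$, Parseval's identity turns the right-hand side into the finite sum $\pscal{w(\mu),r(\mu)}=\sum_{i=1}^{\mathcal N}D_i\Ddmu\,\pscal{r\Dmu,\phi_i}$, which I would split at index $N$ as $S_1+S_2$, with $S_1=\sum_{i=1}^{N}D_i\Ddmu\,\pscal{r\Dmu,\phi_i}$ and $S_2=\sum_{i>N}\pscal{w(\mu),\phi_i}\pscal{r(\mu),\phi_i}$; by construction $\abs{S_2}$ is exactly the nonnegative random variable whose expectation is $T_2(N,\Phi)$. To bound the head $S_1$, I would use that $\mu\in\mathcal P_{k(\mu)}$ forces $\beta_{i,k(\mu)}^{min}(\Phi)\leq D_i\Ddmu\leq\beta_{i,k(\mu)}^{max}(\Phi)$; multiplying by $\pscal{r\Dmu,\phi_i}$ and distinguishing the sign of this coefficient --- which is precisely the case split defining $\beta_i^{up}$ and $\beta_i^{low}$ --- yields in both cases $\pscal{r\Dmu,\phi_i}\,\beta_i^{low}\Ddmu\leq D_i\Ddmu\,\pscal{r\Dmu,\phi_i}\leq\pscal{r\Dmu,\phi_i}\,\beta_i^{up}\Ddmu$. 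Summing over $i=1,\ldots,N$ gives $T_1^{low}\Dddmu\leq S_1\leq T_1^{up}\Dddmu$, whence $\abs{S_1}\leq\max\!\left(\abs{T_1^{low}\Dddmu},\abs{T_1^{up}\Dddmu}\right)=T_1\Dddmu$, since a real number lying between two values has absolute value at most the larger of their absolute values.

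Finally, the triangle inequality gives the pointwise estimate $\abs{s(\mu)-\widetilde s(\mu)}\leq\abs{S_1}+\abs{S_2}\leq T_1\Dddmu+\abs{S_2}$, so the event $\{\abs{s(\mu)-\widetilde s(\mu)}>T_1\Dddmu+T_2(N,\Phi)/\alpha\}$ is contained in $\{\abs{S_2}>T_2(N,\Phi)/\alpha\}$, and Markov's inequality applied to the nonnegative variable $\abs{S_2}$ of mean $T_2(N,\Phi)$ bounds the latter probability by $\alpha$. I expect the main obstacle to be conceptual rather than computational: it is the head/tail philosophy itself, namely that the tail $S_2$ involves the expensive coefficients $\pscal{w(\mu),\phi_i}$ for large $i$ and hence cannot be bounded cheaply for each individual $\mu$, so it is controlled only through its average $T_2$, with Markov's inequality converting that average control into the claimed $\alpha$-level guarantee. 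The only routine care needed is the sign bookkeeping in the head bound and the degenerate case $T_2=0$, where $\abs{S_2}=0$ almost surely and the statement is immediate.
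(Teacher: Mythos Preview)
Your proposal is correct and follows essentially the same route as the paper: the adjoint representation $\widetilde s(\mu)-s(\mu)=\pscal{w(\mu),r(\mu)}$, expansion in the orthonormal basis $\Phi$, the deterministic head bound $|S_1|\leq T_1\Dddmu$ via the sign-dependent $\beta_i^{low/up}$ sandwich, and Markov's inequality on the tail $|S_2|$ of mean $T_2(N,\Phi)$. The only cosmetic difference is that the paper writes the final step as a chain of probability inequalities rather than invoking the triangle inequality explicitly, but the underlying event inclusion is identical.
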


{\em Proof:}
We begin by noticing that:
\[ A(\mu)^{-1} r(\mu) = Z \widetilde u(\mu) - u(\mu) \]
so that:
\[ \widetilde s(\mu) - s(\mu) = \pscal{l, Z \widetilde u(\mu) - u(\mu)} = \pscal{l, A(\mu)^{-1} r(\mu)} = \pscal{w(\mu), r(\mu)}. \]
We expand the residual in the $\Phi$ basis:
\[ r(\mu) = \sum_{i\geq1} \pscal{r(\mu), \phi_i} \phi_i. \]
Hence:
\begin{equation}
\label{e:decerr}
\widetilde s(\mu) - s(\mu) \rcancel{= \sum_{i\geq1} \pscal{l, A(\mu)^{-1} \phi_i} \pscal{r(\mu), \phi_i}} = \sum_{i\geq1} \pscal{w(\mu), \phi_i} \pscal{r(\mu),\phi_i}.
\end{equation}
We clearly have that for any $N\in\N^*$:
\[ 
	\sum_{i=1}^N \pscal{r\Dmu, \phi_i} \beta_i^{low}\Ddmu 
	\leq
 \sum_{i=1}^N \pscal{r\Dmu, \phi_i} \pscal{w(\mu),\phi_i}  \leq 
	 \sum_{i=1}^N \pscal{r\Dmu, \phi_i} \beta_i^{up}\Ddmu
	 \]
and this implies:
\begin{equation}
\label{e:myeqn}
\abs{  \sum_{i=1}^N \pscal{r\Dmu,\phi_i} \pscal{w(\mu), \phi_i} } \leq T_1\Dddmu. 
\end{equation}
So we have:       
\begin{align*}
P&\left(\abs{s(\mu)-\widetilde s(\mu)} > T_1\Dddmu + \frac{T_2(N,\Phi)}{\alpha} \right) \\
\leq P&\left( \abs{s(\mu)-\widetilde s(\mu)} > \abs{\sum_{i=1}^N \pscal{r\Dmu,\phi_i} \pscal{w\Dmu, \phi_i}  } + \frac{T_2(N,\Phi)}{\alpha} \right) \text{ by \eqref{e:myeqn}} \\
= 	P&\left( \abs{s(\mu)-\widetilde s(\mu)} -\abs{\sum_{i=1}^N \pscal{r\Dmu,\phi_i} \pscal{w\Dmu, \phi_i}  } > \frac{T_2(N,\Phi)}{\alpha} \right) \\
\leq P&\left( \abs{\sumfinie  \pscal{r\Dmu,\phi_i} \pscal{w\Dmu, \phi_i} } > \frac{T_2(N,\Phi)}{\alpha} \right) \text{ by \eqref{e:decerr}}\\
\leq  \alpha& \text{ thanks to Markov's inequality. } \qquad\qed 
\end{align*}

\paragraph{Choice of $\Phi$} The error bound given in Theorem \ref{t:1} above is valid for any orthonormal basis $\Phi$. For efficiency reasons, we would like to choose $\Phi$ so that the parameter-independent part $T_2(N,\Phi)$ is the smallest possible, for a fixed truncation index $N\in\N^*$.

To our knowledge, minimizing $T_2(N,\Phi)$ over orthonormal bases of $X$ is an optimization problem for which no efficient algorithm exists. However, we can minimize an upper bound of $T_2(N,\Phi)$.

We define an auto-adjoint, positive operator $G: X \rightarrow X$ by:
\begin{equation}
\label{e:defg}
 \forall \phi\in X,\;\; G\phi =  \frac{1}{2} \Emu\left( \pscal{ r(\mu), \phi } r(\mu) + \pscal{ w(\mu), \phi } w(\mu)  \right).
\end{equation}
Let $\lambda_1 \geq \lambda_2 \geq \ldots \lambda_{\mathcal N}\geq 0$ be the eigenvalues of $G$. Let, for $i\in\{1,2\ldots,\mathcal N\}$, $\phi_i^G$ be an unit eigenvector of $G$ associated with the $i^\textrm{th}$ eigenvalue, and $\Phi^G = \{ \phi_1^G, \ldots, \phi_{\mathcal N}^G \}$. 

We can state that:
\begin{theorem}
\[ T_2(N, \Phi^G) \leq \sumfinie \lambda_i^2. \]
\end{theorem}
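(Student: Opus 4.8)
The plan is to rewrite the tail sum defining $T_2$ as a single inner product governed by the orthogonal projection onto the trailing eigenvectors of $G$, and then to control its $\mu$-expectation through the spectral data of $G$. Let $\Pi=\sum_{i>N}\phi_i^G(\phi_i^G)^t$ be the orthogonal projection of $X$ onto $\mathrm{span}\{\phi_i^G:\ i>N\}$; since $\Phi^G$ is orthonormal, $\Pi$ is symmetric and idempotent. Using $\pscal{w(\mu),\phi_i^G}=(\phi_i^G)^t w(\mu)$ and the same identity for $r(\mu)$, the inner sum in $T_2$ collapses to $\sum_{i>N}\pscal{w(\mu),\phi_i^G}\pscal{r(\mu),\phi_i^G}=\pscal{\Pi w(\mu),\Pi r(\mu)}$, so that
\[ T_2(N,\Phi^G)=\Emu\left(\abs{\pscal{\Pi w(\mu),\Pi r(\mu)}}\right). \]

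First I would record the spectral facts linking $\Pi$ to the $\lambda_i$. Writing $R=\Emu\left(r(\mu)r(\mu)^t\right)$ and $W=\Emu\left(w(\mu)w(\mu)^t\right)$, the operator \eqref{e:defg} is $G=\tfrac12(R+W)$. Since $\Pi G\Pi$ is diagonalized by $\Phi^G$ with eigenvalues $\lambda_i$ for $i>N$ and $0$ otherwise, one has $\mathrm{tr}(\Pi G)=\sum_{i>N}\lambda_i$ and, crucially for the target, $\mathrm{tr}\left((\Pi G\Pi)^2\right)=\sum_{i>N}\lambda_i^2$. Pairing the definition of $G$ against two distinct eigenvectors also yields the orthogonality relations $\Emu\left(\pscal{r(\mu),\phi_i^G}\pscal{r(\mu),\phi_j^G}\right)+\Emu\left(\pscal{w(\mu),\phi_i^G}\pscal{w(\mu),\phi_j^G}\right)=0$ for $i\neq j$, together with the diagonal identity $\tfrac12\Emu\left(\pscal{r(\mu),\phi_i^G}^2+\pscal{w(\mu),\phi_i^G}^2\right)=\lambda_i$. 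These are the only structural facts the diagonalizing basis supplies, and the estimate must be assembled from them.

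To bring in the second power of the eigenvalues I would square before taking the expectation: by Jensen's inequality (concavity of the square root), $T_2\le\sqrt{\Emu\left(\pscal{\Pi w(\mu),\Pi r(\mu)}^2\right)}$, and expanding in the coordinates $r_i=\pscal{r(\mu),\phi_i^G}$, $w_i=\pscal{w(\mu),\phi_i^G}$ (for $i>N$) gives $\Emu\left(\pscal{\Pi w,\Pi r}^2\right)=\sum_{i,j>N}\Emu\left(r_iw_ir_jw_j\right)$. The diagonal terms $\Emu(r_i^2w_i^2)$ pair naturally with $\lambda_i^2=\left(\tfrac12\Emu(r_i^2+w_i^2)\right)^2$, and the off-diagonal terms are to be reorganized through the cancellations $\Emu(r_ir_j)=-\Emu(w_iw_j)$ so as to reconstitute the Frobenius functional $\mathrm{tr}\left((\Pi G\Pi)^2\right)=\sum_{i>N}\lambda_i^2$ rather than the weaker first-order trace $\mathrm{tr}(\Pi G)$.

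The hard part is precisely the passage from the first to the second power of the eigenvalues, and it has two facets. First, the crude pointwise estimate $\abs{\pscal{\Pi w,\Pi r}}\le\tfrac12\left(\norm{\Pi w}^2+\norm{\Pi r}^2\right)$ only delivers the first-order bound $T_2\le\mathrm{tr}(\Pi G)=\sum_{i>N}\lambda_i$, so the second power cannot come from norm bounds and must be earned by working with the squared quantity and its full spectral expansion. Second, the sum $\sum_{i,j>N}\Emu\left(r_iw_ir_jw_j\right)$ is built from genuine fourth-order moments of the pair $(r(\mu),w(\mu))$ that are not determined by $G$ alone; controlling these, and reconciling the residual square root left by Jensen, using only the second-order eigenvalue data $\{\lambda_i\}$ and the orthogonality relations above, is the delicate and decisive step on which the precise form $\sum_{i>N}\lambda_i^2$ of the bound hinges.
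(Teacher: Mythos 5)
Your proposal leaves its self-identified ``decisive step'' open, and that gap cannot be closed, because the statement you are trying to reach --- read literally, with $\lambda_i$ the eigenvalues of $G$ --- is not provable. A homogeneity check shows this: replace $f$ by $cf$ and $l$ by $cl$. Then $u(\mu)$, $\widetilde u(\mu)$, $r(\mu)$ and $w(\mu)$ all scale by $c$, so $T_2$ scales by $c^2$, while $G$ (hence each $\lambda_i$) scales by $c^2$ and $\sum_{i>N}\lambda_i^2$ scales by $c^4$; if the second-power bound held for every admissible problem it would force $T_2=0$. This is also visible in your own analysis: the fourth-order moments $\Emu\left(r_i w_i r_j w_j\right)$ are genuinely not determined by $G$, and no manipulation of the orthogonality relations you list can substitute for them. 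So the Jensen/fourth-moment program is not a missing lemma away from completion; it is attempting to prove a false inequality.

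The resolution is that the ``crude'' bound you explicitly rejected is exactly the paper's proof. The paper estimates
\[
T_2(N,\Phi)\;\leq\;\frac12\,\Emu\left(\sum_{i>N}\pscal{w(\mu),\phi_i}^2+\sum_{i>N}\pscal{r(\mu),\phi_i}^2\right)\;=\;\sum_{i>N}\pscal{G\phi_i,\phi_i}\;=:\;T_2^{sup}(N,\Phi),
\]
which is precisely your first-order trace bound, and then invokes the POD optimality theorem (Theorem 1.1 of the Volkwein reference) to conclude that $\Phi^G$ minimizes $T_2^{sup}(N,\Phi)$ over all orthonormal bases, the minimum being the tail sum of the spectral quantities of $G$. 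The exponent $2$ in the theorem's statement is a notational artifact of that citation: in Volkwein's convention the squared quantities are singular values, so in terms of the eigenvalues of $G$ as defined in this paper the quantity actually proved is $\sum_{i>N}\lambda_i$ --- which is exactly what your projection argument gives, since $\mathrm{tr}(\Pi G)=\sum_{i>N}\pscal{G\phi_i^G,\phi_i^G}=\sum_{i>N}\lambda_i$. In short: keep your first two steps, evaluate the trace at $\Phi=\Phi^G$ (optionally citing POD optimality to justify that this basis choice minimizes the upper bound $T_2^{sup}$), and delete the attempted upgrade to the second power; the upgrade is both unnecessary for the paper's argument and impossible.
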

\begin{proof}
We have:
\[
T_2(N,\Phi) \leq \frac{1}{2} \Emu\left( \sumfinie \pscal{w(\mu), \phi_i}^2 + \sumfinie \pscal{r(\mu),\phi_i}^2 \right) =: T_2^{sup}(N,\Phi) = \sumfinie \pscal{G \phi_i, \phi_i}
\]
Using Theorem 1.1 of \cite{volkwein1999proper}, we get that the minimum of $T_2^{sup}(N,\Phi)$ is attained for $\Phi=\Phi^G$, and that minimum is $\sumfinie \lambda_i^2$.
\end{proof}

This theorem suggests to use $\Phi=\Phi^G$, so as to control $T_2(N,\Phi)$.

\subsection{Monte-Carlo approximation of the error bound}
\label{ss:montecarloapprox}
In this Subsection, we present an implementable offline/online procedure for the estimation of the upper bound for $\abs{\widetilde s(\mu)-s(\mu)}$ presented in Theorem \ref{t:1}. 

\paragraph{Estimation of $\phi_i^G$}
We fix a truncation index $N\in\N^*$, and we estimate $\{\phi_i^G\}_{i=1,\ldots,N}$ by using a modification of the method of snapshots used in Proper Orthogonal Decomposition \cite{sirovich1987turbulence}. This estimation is performed during the offline phase. We begin by estimating the $G$ operator by $\widehat G$, then we approximate $\phi_i^G$ by the appropriate eigenvectors of $\widehat G$. 
\medskip

\emph{Estimation of $G$:} \hspace*{.25cm}
We take a finite (large), subset of parameters $\Xi \subset \mathcal P$, randomly sampled from the distribution of the parameter, and we approximate the $G$ operator by:
\[ \widehat G \phi = \frac{1}{2 \#\Xi} \sum_{\mu\in\Xi} \left( \pscal{r(\mu),\phi}r(\mu)+\pscal{w(\mu),\phi}w(\mu) \right) \]
In other words, $\widehat G$ is a Monte-Carlo estimator of $G$. We take $\{\widehat \phi_i^G\}_{i=1,\ldots,N}$ as the unit eigenvectors associated with the $N$ largest eigenvalues of $\widehat G$.
\medskip

\emph{Computation of the eigenvalues of $\widehat G$:} \hspace*{.25cm} The operator $\widehat G$ admits the following matrix representation:
\[ \widehat G=\frac{1}{2 \#\Xi} \left( WW^t+RR^t \right), \]
where $W$ (resp. $R$) is the matrix whose columns are the components of $w(\mu)$ (resp. $r(\mu)$) in a basis of $X$, for $\mu\in\Xi$. These two matrices have $\#\Xi$ columns and $\dim X$ lines, which means that the matrix above is $\dim X \times \dim X$. 

In general, we take $\#\Xi\ll\dim X$, and so it is computationally advantageous to notice that if $\phi$ is an eigenvector of $\widehat G$ associated with a nonzero eigenvalue $\lambda$, then:
\[ \frac{1}{\lambda}\frac{1}{2\#\Xi} \left(  (WW^t\phi+RR^t\phi) \right)=\phi, \]
so that $\phi\in\im W +\im R=:\mathcal V$. Hence, if $V$ is the matrix of an orthonormal basis of $\mathcal V$, then there exists $\psi$ so that $\phi=V\psi$ and we have:
\[ WW^t\phi+RR^t\phi=\lambda\phi \;\; \Longrightarrow \;\; \left[ V^t \frac{1}{2\#\Xi} \left( (WW^t+RR^t) \right) V \right] \psi=\lambda \psi. \]
As a consequence, it is sufficient to find the dominant eigenvectors $\widehat\psi_1^G,\ldots,\widehat\psi_N^G$ of the matrix $\Sigma=\frac{1}{2\#\Xi} V^t(WW^t+RR^t)V $ (of size $2 \add{\#}\Xi$), and to deduce $\widehat\phi_i^G$ from $\widehat\psi_i^G$ by the relation $\widehat\phi_i^G = V \widehat\psi_i^G$. Besides, by writing $\Sigma$ as:
\[\Sigma=\frac{1}{2\#\Xi} \left( (V^t W)(W^tV)+(V^tR)(R^tV) \right), \]
it is possible to compute and store $\Sigma$ without storing nor computing any dense $\dim X \times \dim X$ matrix.

\paragraph{Computation of $T_1\Dddmu$} All the quantities intervening in $T_1\Dddmu$ can be straightforwardly deduced from $\beta_{i,k}^{min,max}$ and $\pscal{r\Dmu,\widehat\phi_i^G}$. \medskip 

\emph{Computation of $\beta_{i,k}^{min}(\Phi)$ and $\beta_{i,k}^{max}(\Phi)$: } \hspace*{.25cm}
For $i=1,\ldots,N$ and $k=1,\ldots,K$, the \mbox{reals} $\beta_{i,k}^{min}(\Phi)$ and $\beta_{i,k}^{max}(\Phi)$ can be computed during the offline phase, as they are parameter-independent. \add{If the $\Theta_{q}$s functions are smooth enough, }\remove{Thanks to the availability of the gradient of $D_i(\mu)$ with respect to $\mu$,} a quasi-Newton optimization such as L-BFGS \cite{LbfgsB} can be used so as to compute these reals; they can also be approximated by a simple discrete minimization:
\begin{equation}
\label{betatilde}
\widetilde\beta_{i,k}^{min}(\Phi)=\min_{\mu\in\Xi\cap\mathcal P_k} D_i(\mu,\Phi), \;\;\;\; 
\widetilde\beta_{i,k}^{max}(\Phi)=\max_{\mu\in\Xi\cap\mathcal P_k} D_i(\mu,\Phi).
\end{equation}
One should note here that one has to be careful, as these numerical optimisation procedures are not exact. Indeed, the discrete optimisation is only an approximation, and the Quasi-Newton optimisation may be handled carefully, so as to avoid being trapped in local extrema.

\medskip

\emph{Computation of $\pscal{r\Dmu,\widehat\phi_i^G}$: } \hspace*{.25cm} We denote by  $\{\zeta_1,\ldots,\zeta_n\}$ are the column vectors of $Z$, which form a basis of the reduced space $\widetilde X$. We can, during the offline phase, compute the following parameter-independent quantities:
\[ \pscal{f_{q'},\widehat \phi_i^G}, \pscal{A_q \zeta_j, \widehat \phi_i^G} \; (i=1,\ldots,N,\, j=1,\ldots,n,\, q=1,\ldots,Q,\, q'=1,\ldots,Q'). \]
Let a parameter $\mu\in\mathcal P$ be given, and $\widetilde u_1\Dmu, \ldots, \widetilde u_n\Dmu$ be the components of the reduced solution $\widetilde u\Dmu$ in the reduced basis $\{\zeta_1,\ldots,\zeta_n\}$.

By using the relation:
\[ \pscal{r\Dmu,\widehat\phi_i^G} = \sum_{q=1}^Q \Theta_q(\mu) \sum_{j=1}^n \widetilde u_j\Dmu \pscal{A_q\zeta_j,\widehat\phi_i^G} - \sum_{q'=1}^{Q'} \gamma_{q'}(\mu) \pscal{f_{q'},\phi_i^G}, \]
the dot products between the residual and $\widehat\phi_i^G$ can be computed in the online phase, with a complexity of $O(nQ+Q')$ arithmetic operations, $O(Q)$ evaluations of $\Theta$ functions and $O(Q')$ evaluations of $\gamma$ functions, which is independent of $\dim X$.  \medskip

\paragraph{Estimation of $T_2(N,\Phi)$}
We approximate $T_2(N,\Phi)$ by computing the following Monte-Carlo estimator:
\[ \widehat T_2(N,\Phi) = \frac{1}{2\#\Xi} \sum_{\mu\in\Xi} \abs{ \widetilde s(\mu)-s(\mu)-\sum_{i=1}^N  \pscal{w(\mu), \phi_i} \pscal{r(\mu),\phi_i} }. \]
As this quantity is $\mu$-independent, it can be computed once and for all during the offline phase. 


\paragraph{Final error bound}
By using Theorem \ref{t:1}, we get that for $\epsilon(\mu,\alpha,N,\Phi)=T_1\Dddmu+T_2(N,\Phi)/\alpha$, we have:
\[ P \left( \abs{s(\mu)-\widetilde s(\mu)} \geq \epsilon(\mu,\alpha,N,\Phi) \right) \leq \alpha, \]
so we may take, as estimated (computable) error bound with risk $\alpha$,
\begin{equation}\label{e:defheps} \widehat\epsilon(\mu,\alpha,N,\Phi)=T_1\Dddmu+\frac{\widehat T_2(N,\Phi)}{\alpha}. \end{equation}
In the rest of the text, this computable error bound is designated as the \emph{error bound on the non-corrected output}, by contrast to the bound described in Section \ref{ss:errdecomp}.

One may note that estimating $T_2(N,\Phi)$ by $\widehat T_2(N,\Phi)$ causes some error on the risk of the computable error bound $\widehat\epsilon(\mu,\alpha,N,\Phi)$. This error is analyzed in Appendix \ref{appendix}.

\subsection{Bound on the corrected output}
\label{ss:boundcorrected}
The reduced output $\tilde s(\mu)$ is a natural reduced output that approximates $s(\mu)$. It is possible to solve an auxiliary problem in order to compute an error correction that improves the order of convergence of the reduced output. As we will see, the error bound presented above can be easily modified so as to certify the corrected output.

\paragraph{Output correction}
The idea of solving an adjoint problem in order to improve the order of convergence of a scalar output has originated in \cite{machiels1998general}, with the first application in the reduced basis context in \cite{Maday2002533}. We introduced the so-called adjoint problem, whose solution \remove{$u_d(\mu)$} \add{$w(\mu)$} satisfies:
\[ A(\mu)^t \rcancel{u_d}\add{w}(\mu) = l, \]
and the solution $ \rcancel{\tilde u_d} \add{\tilde w}(\mu)$ of the reduced adjoint problem:
\[ Z_d^t A(\mu)^t Z_d \rcancel{\tilde u_d} \add{\tilde w}(\mu) = Z_d^t l, \]
where $Z_d$ is the selected matrix of the reduced basis for the adjoint basis.

The corrected output is:
\[ \tilde s_c (\mu) = \tilde s(\mu) - \pscal{Z \rcancel{\tilde u_d} \add{\tilde w}(\mu), r(\mu)} \]

One shoud note that the corrected output can be computed using an efficient offline-online procedure which requires \emph{two} reduced basis solutions, hence roughly doubling (when $Z_d$ has the same number of columns than $Z$) the offline and online computation times, except in the particular case where $A$ is symmetric and $l$ is proportional to $f$.

\paragraph{Existing error bound on the corrected output}
In \cite{nguyen2005certified}, it is shown that:
\begin{equation}
\label{e:borneMPdual}
 \abs{s(\mu) - \tilde s_c(\mu)} \leq \frac{\norm{r(\mu)}' \norm{r_d(\mu)}'}{\alpha(\mu)} =: \epsilon_{cc}(\mu) 
\end{equation}
where $\alpha(\mu)$ is the stability constant bound defined at \eqref{e:stabconst} (in the symmetric case) and $r_d(\mu)$ is the dual residual:
\[ r_d(\mu) = A^t(\mu) Z_d \rcancel{\tilde u_d} \add{\tilde w}(\mu) - l(\mu). \]
Hereafter, the existing bound $\epsilon_{cc}$ is called \emph{dual-based error bound}.

\paragraph{Proposed error bound}
It is clear that the work performed in the above sections can be reused so as to provide a probabilistic bound on $\abs{s(\mu)-\tilde s_c(\mu)}$, by simply replacing $w(\mu)$ by:
\begin{equation}
\label{e:changeW}
 w_c(\mu) = w(\mu) - Z \rcancel{\tilde u_d} \add{\tilde w}(\mu), 
\end{equation}
and hence giving a competitor (called \emph{error bound on the corrected output}) to the dual-based error bound.

\subsection{Summary of the different bounds}
To sum up, we have presented four output computable error bounds. Two of them are bounds for the uncorrected ouput error:
\begin{itemize}
\item the ``Lipschitz'' error bound $\epsilon^L$ \eqref{e:bornaive};
\item the estimated error bound on the uncorrected output $\widehat\epsilon$, that we propose in \eqref{e:defheps};
\end{itemize}
and two of them for the corrected output error:
\begin{itemize}
\item the existing dual-based error bound $\epsilon_{cc}$, defined at \eqref{e:borneMPdual};
\item the estimated error bound on the corrected output $\widehat\epsilon_c$, that is $\eqref{e:defheps}$ amended with \eqref{e:changeW}.
\end{itemize}

\paragraph{Comparison of the various presented bounds.}

We can now discuss and compare the different considered error bounds (numerical comparisons are gathered in sections \ref{s:numres1} and \ref{s:numres2}).

\emph{Online costs:} using $\epsilon^{cc}$ or $\widehat\epsilon^c$ will require a double computation time (except if we are in a ``compliant'' case, i.e. $l=f$), when compared with using $\epsilon^L$ or $\widehat\epsilon$. However, this computational supplement enables error correction of the reduced output and an improved error estimation. Except for the possible supplemental cost for the adjoint problem resolution, the online cost of $\widehat\epsilon^c$ or $\widehat\epsilon$ should be comparable to the cost of $\epsilon^{cc}$, if not slightly better for the former, as no optimization problem has to be solved in the online phase (while $\epsilon^{cc}$, when using the Successive  Constraints Method (SCM) \cite{huynh2007successive}, requires a linear programming in order to compute an online $\alpha(\mu)$).

\emph{Offline costs:} the SCM procedure used for $\epsilon^{cc}$ and $\epsilon^L$ requires the offline resolution of a number of large eigenproblems on the ``finite element space'' $X$. Depending on the problem at hand, this may, or may not be more expensive than the optimization problems required to compute the $\widetilde\beta$ constants (see \eqref{betatilde}) and the Monte-Carlo estimation of $\widehat T_2$. One of the methods can also be feasible and the other not.

\emph{Probabilistic vs. deterministic:} one can also notice that the $\widehat\epsilon^c$ and $\widehat\epsilon$ bounds are probabilistic in nature; they  increase when the risk level decreases. In practice, as shown in the numerical experiments of Section 4, our probabilistic bound is much shaper, even while choosing a very small risk level.

\emph{Accuracy:} the classical bound introduces the dual norm of the residual, which causes a loss of accuracy. In this work, we avoid this step by using
the majoration in \eqref{e:myeqn} and a probabilistic argument.   \medskip

To conclude this discussion, one can say that there may not be a definitive winner error bound, and that the best choice highly depends on the problem at hand (dimension of the $X$ space, numbers of terms in the affine decompositions), the computational budget, the required precision, the number of online queries to perform, and the probability of failure of the error bound that one can afford.

\section{Application to sensitivity analysis}
\label{s:appSA}
Our error estimation method is applied in sensitivity analysis, so as to quantify the error caused by the replacement of the original model output by the reduced basis output during the Monte-Carlo estimation of the Sobol indices. For the sake of self-completeness, we briefly present the aim and the computation of these indices, and we refer to \cite{saltelli-sensitivity}, \cite{saltelli2002making} and \cite{janon:inria-00567977} for details.

\subsection{Definition of the Sobol indices}
For $i=1,\ldots,p$, the $i^\textrm{th}$ Sobol index of a function of $p$ variables $s(\mu_1,\ldots,\mu_p)$ is defined by:
\begin{equation}
\label{e:defsi}
 S_i = \frac{\Var\left( \E(s(\mu_1,\ldots,\mu_p)|\mu_i) \right)}{\Var\left( s(\mu_1,\ldots,\mu_p) \right)}, 
\end{equation}
the variances and conditional expectation being taken with respect to a postulated distribution of the $(\mu_1,\ldots,\mu_p)$ input vector accounting for the uncertainty on the inputs' value. These indices are well defined as soon as $s \in L^2(\mathcal P)$ and \remove{that} $\Var\left( s(\mu_1,\ldots,\mu_p) \right) \neq 0$. When $\mu_1, \ldots, \mu_p$ are (stochastically) independent, the $i^\textrm{th}$ Sobol index can be interpreted as the fraction of the variance of the output that is caused by the uncertainty on the $i^\textrm{th}$ parameter $\mu_i$. All the Sobol indices lie in $[0;1]$; the closer to zero (resp., one) $S_i$ is, the less (resp., the more) importance $\mu_i$'s uncertainty has on $s$'s uncertainty.

\subsection{Estimation of the Sobol indices}
The conditional expectation and variances appearing in \eqref{e:defsi} are generally not amenable to analytic computations. In those cases, one can estimate $S_i$ by using a Monte-Carlo estimate: from two random, independent samples of size $M$ of the inputs' distribution, we compute $2M$ appropriate evaluations $\{s_j\}$ and $\{s_j'\}$ of $s$, and estimate $S_i$ by:
\begin{equation}
\label{e:defesti}
\widehat S_i = \frac{ \frac{1}{M} \sum_{j=1}^{M} s_j s_j'   - \left(\frac{1}{M} \sum_{j=1}^{M} s_j \right) \left(\frac{1}{M}\sum_{j=1}^{M}  s_j' \right) }{ \frac{1}{M}\sum_{j=1}^{M}  s_j^2 - \left( \frac{1}{M} \sum_{j=1}^{M}  s_j \right)^2 }.
\end{equation}

When $M$ and/or the required time for the evaluation of the model output are large, it is computationally advantageous to replace $s$ by its surrogate model $\widetilde s$. By using \eqref{e:defesti} on $\widetilde s$ (hence with reduced model outputs $\{\widetilde s_j\}$ and $\{\widetilde s_j'\}$), one estimates the Sobol indices \emph{of the surrogate model} rather than those of the true model. We presented in  \cite{janon:inria-00567977}, Sections 3.1 and 3.2, a method to quantify the error made in the Sobol index estimation when replacing the original model by the surrogate one. We defined two estimators $\widehat S_{i,\alpha_{as}/2}^m$ and $\widehat S_{i,1-\alpha_{as}/2}^M$, relying on output error bound samples $\{\epsilon_j\}$ and $\{\epsilon_j'\}$, and proved that:
\begin{theorem}
If:
\[ \forall j=1,\ldots,M, \;\;\; \abs{ s_j - \widetilde s_j } \leq \epsilon_j \;\;\text{and}\;\; \abs{ s_j'-\widetilde s_j' }\leq\epsilon_j', \]
then we have:
\[ P\left( S_i \in [\widehat S_{i,\alpha_{as}/2}^m; \widehat S_{i,1-\alpha_{as}/2}^M ] \right) \geq 1- \alpha_{as}. \]
\end{theorem}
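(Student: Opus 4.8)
The plan is to reduce the statement to the coverage property of the \emph{exact-sample} confidence interval for the Sobol index, and then to show that the computable interval $[\widehat S_{i,\alpha_{as}/2}^m;\widehat S_{i,1-\alpha_{as}/2}^M]$ deterministically contains that exact interval whenever the error bounds hold.

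First I would recall, from the asymptotic analysis of the estimator \eqref{e:defesti}, that were the true outputs $\{s_j\},\{s_j'\}$ available, the central limit theorem for $\widehat S_i$ (which is a smooth ratio of empirical moments of the $2M$ i.i.d.\ pairs) yields endpoints $L^\ast$ and $U^\ast$, computed from $\{s_j\},\{s_j'\}$ at the asymptotic levels $\alpha_{as}/2$ and $1-\alpha_{as}/2$, such that $\P\left(S_i\in[L^\ast;U^\ast]\right)\geq 1-\alpha_{as}$. This coverage is purely a statement about the randomness of the Monte-Carlo sampling of $\mu$ and makes no reference to the surrogate model; I would take it as the probabilistic core of the argument.

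Next I would make precise the deterministic enclosure. The estimators $\widehat S_{i,\alpha_{as}/2}^m$ and $\widehat S_{i,1-\alpha_{as}/2}^M$ are obtained by replacing, in the expressions for $L^\ast$ and $U^\ast$, the unknown $\{s_j\},\{s_j'\}$ by the admissible box $\prod_j[\widetilde s_j-\epsilon_j;\widetilde s_j+\epsilon_j]\times\prod_j[\widetilde s_j'-\epsilon_j';\widetilde s_j'+\epsilon_j']$, taking the infimum for the lower endpoint and the supremum for the upper one. The hypothesis $\abs{s_j-\widetilde s_j}\leq\epsilon_j$ and $\abs{s_j'-\widetilde s_j'}\leq\epsilon_j'$ guarantees that the true sample lies inside this box; hence, by construction, $\widehat S_{i,\alpha_{as}/2}^m\leq L^\ast$ and $U^\ast\leq\widehat S_{i,1-\alpha_{as}/2}^M$, so that $[L^\ast;U^\ast]\subseteq[\widehat S_{i,\alpha_{as}/2}^m;\widehat S_{i,1-\alpha_{as}/2}^M]$ pathwise.

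Finally, the inclusion of events $\{S_i\in[L^\ast;U^\ast]\}\subseteq\{S_i\in[\widehat S_{i,\alpha_{as}/2}^m;\widehat S_{i,1-\alpha_{as}/2}^M]\}$ gives $\P\left(S_i\in[\widehat S_{i,\alpha_{as}/2}^m;\widehat S_{i,1-\alpha_{as}/2}^M]\right)\geq\P\left(S_i\in[L^\ast;U^\ast]\right)\geq 1-\alpha_{as}$, which is the claim. The main obstacle is the enclosure step: since $\widehat S_i$ is a ratio whose numerator and denominator both involve products and squares of the $s_j$, it is not monotone in the individual samples, so computing (or safely bounding) the infimum and supremum of the confidence endpoints over the error box requires a careful sensitivity analysis — tracking the signs of the partial derivatives of the point estimate and of its estimated asymptotic variance with respect to each $s_j$, or alternatively replacing the exact box-optimization by an explicit, provably conservative perturbation bound. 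Controlling the variance estimate, which feeds the half-width of $[L^\ast;U^\ast]$, is the delicate part, since it must be inflated in the correct direction to preserve the one-sided inequalities $\widehat S_{i,\alpha_{as}/2}^m\leq L^\ast$ and $U^\ast\leq\widehat S_{i,1-\alpha_{as}/2}^M$.
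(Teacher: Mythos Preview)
The paper does not actually prove this theorem: it is quoted verbatim from the authors' earlier work \cite{janon:inria-00567977}, Sections~3.1 and~3.2, and stated here without argument. So there is no ``paper's own proof'' to compare against; only the Corollary that follows it (the probabilistic weakening) is proved in the present paper.

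That said, your proposal reproduces what is almost certainly the structure of the cited proof: a coverage statement for an ``ideal'' interval $[L^\ast;U^\ast]$ built from the true outputs, plus a deterministic enclosure $[L^\ast;U^\ast]\subseteq[\widehat S_{i,\alpha_{as}/2}^m;\widehat S_{i,1-\alpha_{as}/2}^M]$ obtained by extremizing the endpoint functionals over the error box $\prod_j[\widetilde s_j\pm\epsilon_j]\times\prod_j[\widetilde s_j'\pm\epsilon_j']$. One detail to adjust: the subscripts $\alpha_{as}/2$, $1-\alpha_{as}/2$ and the mention of $B=500$ bootstrap replications in Section~\ref{s:appSA} indicate that in \cite{janon:inria-00567977} the ideal interval is a bootstrap percentile interval, not an asymptotic CLT interval; your first step should therefore invoke the bootstrap coverage guarantee rather than the delta method. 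The enclosure step is unchanged, since bootstrap quantiles are functionals of the sample just as the CLT endpoints are.

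Your closing paragraph correctly flags the genuine difficulty: the estimator~\eqref{e:defesti} is a non-monotone rational function of the $s_j$, so the box-extremization defining $\widehat S_i^m$ and $\widehat S_i^M$ is not coordinate-wise and requires either an explicit perturbation bound or a sign analysis of the partials. That is exactly the content of Sections~3.1--3.2 of the cited reference, and it is where the real work lies; here you have only named it, not carried it out.
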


In our case, the output error bound $\epsilon(\mu)$ of Theorem \ref{t:1} does not satisfy the above hypothesis, but satisfies a weaker ``probabilistic'' statement. This is the object of the following Corollary:
\begin{cor}
\label{corolle}
If:
\[ \forall j=1,\ldots,M,\;\;\; P\left( \abs{s_j - \widetilde s_j} \geq \epsilon_j \right) \leq \alpha
\;\;\text{and}\;\;
\forall j=1,\ldots,M,\;\;\; P\left( \abs{s_j' - \widetilde s_j'} \geq \epsilon_j' \right) \leq \alpha,
 \]
then we have:
\[ P\left( S_i \in [\widehat S_{i,\alpha_{as}/2}^m; \widehat S_{i,1-\alpha_{as}/2}^M ] \right) \geq (1- \alpha_{as}) \times (1-\alpha)^{2M}. \]
\end{cor}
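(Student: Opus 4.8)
The plan is to reduce the probabilistic hypothesis to the deterministic hypothesis of the previously stated theorem by restricting attention to the event on which \emph{all} $2M$ individual error bounds hold, and then to pay for this reduction with the multiplicative factor $(1-\alpha)^{2M}$. Accordingly, I would first introduce the \emph{good event}
\[ A = \bigcap_{j=1}^{M} \left\{ \abs{s_j - \widetilde s_j} \leq \epsilon_j \right\} \cap \bigcap_{j=1}^{M} \left\{ \abs{s_j' - \widetilde s_j'} \leq \epsilon_j' \right\}, \]
namely the event on which the deterministic hypotheses of the earlier theorem are met, and lower-bound its probability. By assumption each of the $2M$ events $\{\abs{s_j-\widetilde s_j}\leq\epsilon_j\}$ and $\{\abs{s_j'-\widetilde s_j'}\leq\epsilon_j'\}$ has probability at least $1-\alpha$; assuming these $2M$ events are mutually independent (which holds when the evaluation points are drawn independently), the probability of $A$ factorizes and
\[ P(A) = \prod_{j=1}^{M} P\left(\abs{s_j-\widetilde s_j}\leq\epsilon_j\right) \prod_{j=1}^{M} P\left(\abs{s_j'-\widetilde s_j'}\leq\epsilon_j'\right) \geq (1-\alpha)^{2M}. \]

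Next, writing $B = \{ S_i \in [\widehat S_{i,\alpha_{as}/2}^m; \widehat S_{i,1-\alpha_{as}/2}^M] \}$ for the coverage event, I would condition on $A$ and invoke the earlier theorem. On $A$ the deterministic inequalities $\abs{s_j-\widetilde s_j}\leq\epsilon_j$ and $\abs{s_j'-\widetilde s_j'}\leq\epsilon_j'$ hold for every $j$, so the earlier theorem delivers the conditional coverage guarantee $P(B \mid A) \geq 1-\alpha_{as}$. Combining the two steps multiplicatively then yields the claim:
\[ P(B) \geq P(B \cap A) = P(B \mid A)\, P(A) \geq (1-\alpha_{as})(1-\alpha)^{2M}. \]
The product form of the target inequality is exactly what signals that this conditioning-then-factorizing decomposition is the intended route, rather than a cruder union bound.

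The delicate point, and the step I would scrutinize most, is the conditional application $P(B\mid A)\geq 1-\alpha_{as}$. The earlier theorem is phrased with a deterministic hypothesis (the bounds hold for every realization), whereas here I may only use it \emph{after} conditioning on $A$, and conditioning reweights the joint law of the samples on which the coverage statement rests. To make this rigorous I would verify that the coverage argument underlying the earlier theorem is insensitive to this reweighting: ideally by isolating within its proof a purely deterministic bracketing of the true empirical estimator (valid pointwise on $A$) together with a coverage event whose probability is controlled in a way that survives conditioning on $A$. Confirming this robustness, together with the independence of the $2M$ bound-validity events used to obtain $P(A)\geq(1-\alpha)^{2M}$, is where the genuine work lies; once both are in place, the two displayed inequalities above are immediate.
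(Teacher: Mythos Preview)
Your proposal is correct and follows essentially the same approach as the paper: both introduce the good event $A$ on which all $2M$ error bounds hold, use $P(B)\geq P(B\mid A)P(A)$, invoke the deterministic theorem for $P(B\mid A)\geq 1-\alpha_{as}$, and factor $P(A)\geq(1-\alpha)^{2M}$ via independence. Your treatment is in fact more explicit than the paper's (which compresses the argument to two lines and does not spell out the independence assumption or the conditioning subtlety you flag).
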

\begin{proof}
We easily have that:
\begin{eqnarray*}
 P\left( S_i \in [\widehat S_{i,\alpha_{as}/2}^m; \widehat S_{i,1-\alpha_{as}/2}^M ] \right) &\geq&
  P\left( S_i \in [\widehat S_{i,\alpha_{as}/2}^m; \widehat S_{i,1-\alpha_{as}/2}^M ] \,|\, \forall j, \abs{s_j-\widetilde s_j}<\epsilon(\mu) \right)
  \\ &&\times P \left( \forall j, \abs{s_j-\widetilde s_j}<\epsilon(\mu) \right) \\
  &\geq& (1-\alpha_{as})\times(1-\alpha)^{2M}.
\end{eqnarray*}
\end{proof}

\section{Numerical results I: Diffusion equation}
\label{s:numres1}
Important instances of problem \eqref{e:refprob} appear as discretizations of $\mu$-parametrized linear partial differential equations (PDE); the $X$ space is typically a finite element subspace (e.g., Lagrange $P^1$ finite elements), which we still see as identical to $\R^\mathcal N$. $A(\mu)$ and $f$ are given by Galerkin projection of the weak form of the PDE onto a suitable basis of $X$. The boundary conditions of the PDE are usually either encoded in $X$ or in $A(\mu)$, and the inner product used to perform the Galerkin projection is typically the $L^2$ or $H^1$ inner product. The use of the standard Euclidean product is justified by the fact that the relevant functional inner product has already been used to write $A(\mu)$ and $f(\mu)$, and the discrete matricial problem can be considered using the Euclidean inner product.

\subsection{Benchmark problem}
Our benchmark problem \cite{rozza2008venturi} is the following: given a parameter vector
\[ \mu=(\mu_1,\mu_2,\mu_3) \in \mathcal P= [0.25, 0.5] \times [2,4] \times [0.1,0.2], \]
we consider the domain $\Omega=\Omega(\mu)$ below:
\begin{center}
\begin{pspicture}(0,-1)(10,5)
\psline(0,0)(10,0)
\psline(0,5)(2.6,5)
\psarc(2.6,4.6){.4}{0}{90}
\psline(7.4,5)(10,5)
\psarc(7.4,4.6){.4}{90}{180}
\psline(3,4.6)(3,3.4)
\psarc(3.4,3.4){.4}{180}{270}
\psline(3.4,3)(6.6,3)
\psarc(6.6,3.4){.4}{270}{360}
\psline(7,3.4)(7,4.6)
\psline{<->}(0.4,0)(.4,5)
\uput[r](.4,2.5){$1$}
\psline{<->}(4.4,0)(4.4,3)
\uput[l](4.4,1.5){$\mu_1$}
\psline{<->}(3,3.5)(7,3.5)
\uput[u](5,3.5){$\mu_2$}
\psline{<->}(0,3.5)(3,3.5)
\uput[u](1.5,3.5){$4$}
\psline{<->}(2.6,4.6)(3,4.6)
\uput[d](2.8,4.6){$\mu_3$}
\pscircle[linestyle=dashed,linecolor=gray](2.6,4.6){.4}
\psline{<->}(7,3.5)(10,3.5)
\uput[u](8.5,3.5){$4$}
\psline[linecolor=red](0,0)(0,5)
\uput[l](0,2.5){\color{red}{$\Gamma_N$}}

\psline[linecolor=green](10,0)(10,5)
\uput[r](10,2.5){\color{green}{$\Gamma_D$}}

\psline(6.5,.5)(7,-.5)
\uput[dr](7,-.5){$\Omega$}
\end{pspicture}
\end{center}
Our continuous field variable $u_e=u_e(\mu) \in X_e$ satisfies:
\begin{equation}
	\label{e:continuous}
	\left\{ \begin{array}{l}
			 \Delta u_e = 0 \text{ in } \Omega \\
			 u_e = 0 \text{ on } \Gamma_D \\
			 \frac{\partial u_e}{\partial n} = -1 \text{ on } \Gamma_N \\
			 \frac{\partial u_e}{\partial n} = 0 \text{ on } \partial\Omega\setminus(\Gamma_N\cup\Gamma_D) \end{array} \right. 
\end{equation}
	 where
	 \[ X_e=\{ v \in H^1(\Omega) \text{ s.t. } v|_{\Gamma_D}=0\}, \]
	 $\Delta$ denotes the Laplace operator, and $\frac{\partial}{\partial n}$ is the normal derivative with respect to $\partial\Omega$.

	 This continuous variable denotes the potential of a steady, incompressible flow moving in a tube whose profile is given by $\Omega$, with open ends on $\Gamma_N$ and $\Gamma_D$. The Neumann boundary condition on $\Gamma_N$ states that the fluid enters by $\Gamma_N$ with \remove{unit speed} \add{velocity equal to one}, the condition on $\partial\Omega\setminus(\Gamma_N\cup\Gamma_D)$ states that the velocity field is tangential to the boundary of the tube; finally the Dirichlet condition on $\Gamma_D$ guarantees well-posedness, as the potential field is determinated up to a constant.

\remove{The problem \eqref{e:continuous} is equivalent to the following variational formulation} \add{The variational formulation of \eqref{e:continuous} states as follows}: find $u_e=u_e(\mu)\in X_e$ so that:
\[ \int_\Omega \nabla u_e \cdot\nabla v = - \int_{\Gamma_N} v, \;\;\;\forall v\in X_e. \]
This variational problem is well-posed, as the bilinear form $(u,v)\mapsto \int_\Omega \nabla u \cdot \nabla v$ is coercive on $X_e$ (see, for instance,  \cite{toselli2005domain}, lemma A.14).

The above variational problem is discretized using a finite triangulation $\mathcal T$ of $\Omega$ and the associated $P^1(\mathcal T)$ (see \cite{ciarlet2002finite} or \cite{quarteroni2008numerical}) finite element subspace: find $u\in X$ so that
\[ \int_\Omega \nabla u \cdot\nabla v = - \int_{\Gamma_N} v \;\;\;\forall v\in X, \]
where $X=\{ v \in \mathbf P^1(\mathcal T) \text{ s.t. } v|_{\Gamma_D}=0\}$. 

In our experiments, $\dim X=525$.

The affine decomposition of the matrix of the bilinear form in the left-hand side of the above equation is obtained by using a piecewise affine mapping from $\Omega(\mu)$ to a reference domain $\bar\Omega$ as explained in \cite{quarteroni2011certified}, page 11. 

Our scalar output of interest is taken to be:
\[ s(\mu) = \int_{\Gamma_N} u(\mu), \]
and \remove{$\mathcal P$ is endowed with the uniform distribution} \add{$\mu$ has uniform distribution on $\mathcal P$}.

\subsection{Results}
We now present the numerical results obtained using the different error bounds on the output of the model described above. We report our bounds on the non-corrected and corrected outputs, as well as the dual-based output bound. Note that the stability constant $\alpha(\mu)$ is taken as the exact inf; this clearly advantages the dual-based output bound.

For the comparisons to be fair, one should compare the error bounds of same online cost. It is widely assumed that there exists a constant $C$ so that this cost is $C \times 2 (\dim\widetilde X)^3$ for the dual-based method, and $C (\dim\widetilde X)^3$ for our method, since dual-based method involves online inversion of two linear systems of size $\dim\widetilde X$, and one system of the same size for our method. Hence, the reported reduced basis sizes for the dual method are multiplied by a factor $\sqrt[3]{2}$.

In all cases, the reduced bases are computed using POD with snapshot size 80. To compute $\widehat G$, we use a snapshot of size $200$. We also took $K=1$ (ie., a trivial partition of $\mathcal P$). The truncation index $N$ is taken equal to 20. We used a discrete minimization procedure to estimate the $\beta_i$ constants. 


In Figure \ref{f:1}, we compare the different error bounds on the non-corrected, and corrected output. For instance, for the error bound on the non-corrected output, we plot:
\[ \bar\epsilon=\frac{1}{\#S}\sum_{\mu\in S} \widehat\epsilon(\mu,\alpha,N,\Phi) \]
where $S$ is a random subset of $\mathcal P$ with size 200 and $\widehat\epsilon(\mu,\alpha,N,\Phi)$ is defined at \eqref{e:defheps}. Other error bound means are computed accordingly. 


\begin{figure}
\begin{center}
\includegraphics[width=13cm]{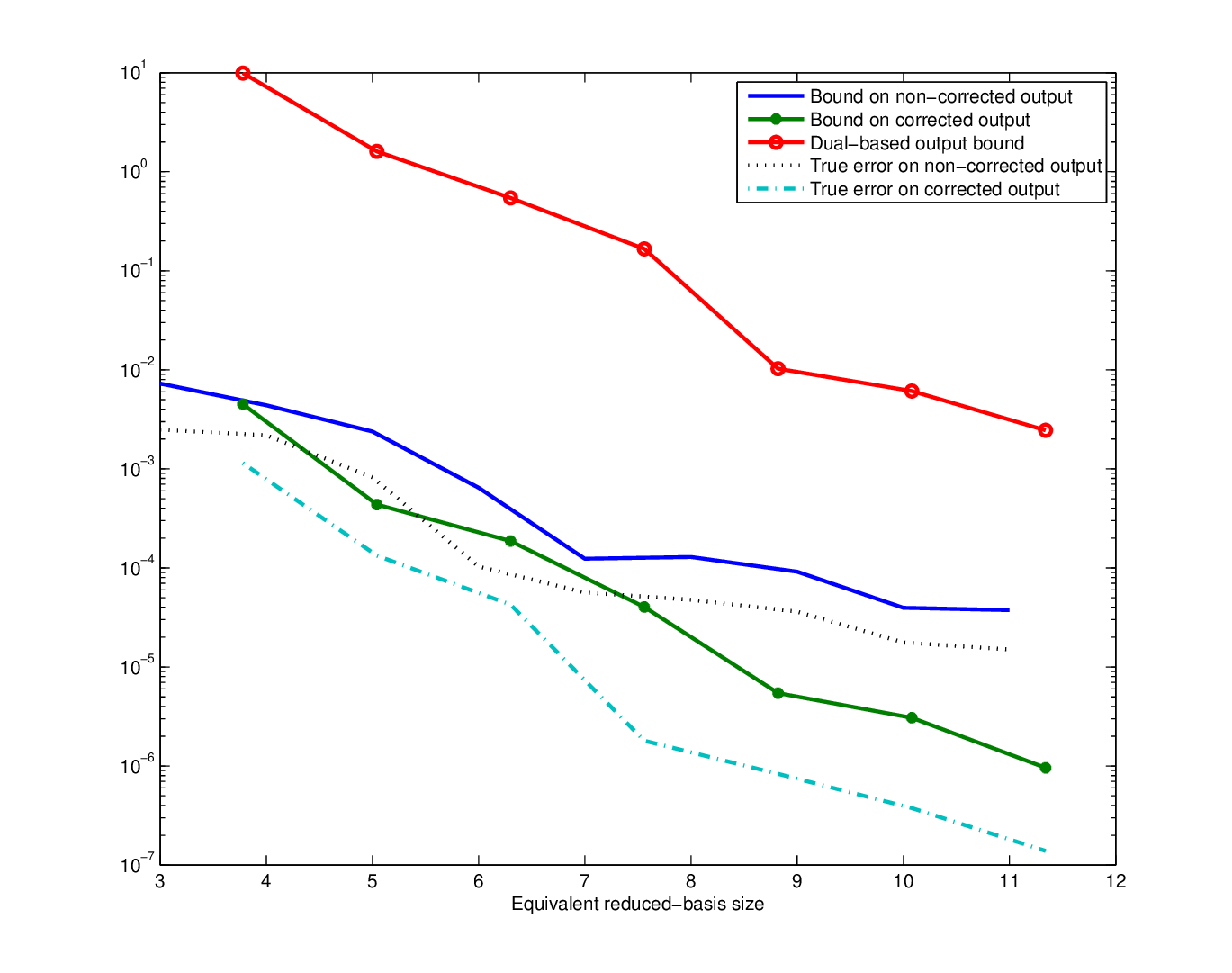}
\caption{{\small Comparison of the mean error bound on the non-corrected output, the mean dual-based error bound ($\epsilon_{cc}$) and the mean error bound on the corrected output (for risk $\alpha=0.0001$). The ``equivalent'' reduced basis sizes are in abscissae. }}
\label{f:1} 
\end{center}
\end{figure}


We also computed the mean of the Lipschitz error bound $\epsilon^L$. It is not reported here as it was way higher than dual-based output error bound. We observe that our new output error bound outperforms  the dual-based error bound, at least for finite reduced bases sizes. Two reasons can be seen to this superiority: the difference in nature (probabilistic vs. sure) between the two bounds, and the fact that we make a crucial use of expansion \eqref{e:decerr}  instead of using a Cauchy-Schwarz (or duality norm) argument. The rate of convergence (slope of the curve) of the corrected output is better than the non-corrected one, and this fact is reported by the two error bounds. Also, the expectation of $\widehat T_2$ was estimated at $10^{-12}$, which allows to choose a  low target risk and remain very competitive, as the $\alpha$-dependency of the bound is in $\widehat T_2/\alpha$.  

\subsection{Application to sensitivity analysis}
We estimate confidence intervals for the sensitivity indices of $s(\mu)$ by using the method described in \cite{janon:inria-00567977}, together with Corollary \ref{corolle}, and the non-corrected output.

We take $M=1000$ as sample size, $B=500$ as number of bootstrap replications (this parameter is used in the procedure which provides the confidence intervals from $B$ replications of $\widehat S_i$, see \cite{janon:inria-00567977}), $\dim\tilde X=10$ as reduced basis size, $\alpha=0.00001$ as output error bound risk, and $\alpha_{as}=0.05$ as Monte-Carlo risk. The level of the combined confidence interval $\left[\widehat S_{i,\alpha_{as}/2}^m;\widehat S_{i,1-\alpha_{as}/2}^M \right]$ is then $(1-\alpha_{as}) (1-\alpha)^M > 0.93$.

The results are gathered in Table \ref{t:2}.  The spread between $\widehat S_i^m$ and $\widehat S_i^M$ accounts for the  \emph{metamodel-induced} error in the estimation of the Sobol indices. The remaining spread between $\widehat S_{i,\alpha_{as}/2}^m$ and $\widehat S_{i,1-\alpha_{as}/2}^M $ is the impact of the sampling error (due to the replacement of the variances in the definition of the Sobol indices by their empirical estimators). We see that, in this case, the metamodel-induced error (certified by the use of our goal-oriented error bound) is very small with regard to the sampling error. We also notice that the estimate for the Sobol index for $\mu_3$ is negative; this is not contradictory as it is the true value of the index that is in $[0,1]$. For small indices, the estimate can be negative.

{\scriptsize
\begin{table}
\begin{center}
\begin{tabular}{|l|l|l|}
	\hline Input parameter & $\left[ \widehat S_i^m; \widehat S_i^M \right]$ & $\left[\widehat S_{i,\alpha_{as}/2}^m;\widehat S_{i,1-\alpha_{as}/2}^M \right]$ \\
	\hline $\mu_1$ & [0.530352;0.530933]   & [0.48132; 0.5791] \\
	\hline $\mu_2$ & [0.451537;0.452099]   & [0.397962;0.51139] \\
	\hline $\mu_3$ & [0.00300247;0.0036825]& [-0.0575764;0.0729923] \\
	\hline
\end{tabular}
\caption{{\small Results of the application of Section \ref{s:appSA} to the estimation of the Sobol indices of the output of our benchmark model. }}
\label{t:2} 
\end{center}
\end{table}
}

\section{Numerical results II: transport equation}
\label{s:numres2}
We now apply our error bound on a non-homogeneous linear transport equation. Compared to the previous example, the considered PDE is of a different kind (hyperbolic rather than elliptic).

\subsection{Benchmark problem}
In this problem, the continuous field $u_e=u_e(x,t)$ is the solution of the linear transport equation:
\[ \frac{\partial u_e}{\partial t}(x,t) + \mu \frac{\partial u_e}{\partial x}(x,t) = \sin(x) \exp(-x) \]
for all $(x,t)\in]0,1[\times]0,1[$, satisfying the initial condition:
\[ u_e(x,t=0)=x(1-x)\;\;\;\forall x\in[0,1], \]
and boundary condition:
\[ u_e(x=0,t)=0\;\;\;\forall t\in[0,1]. \]
The parameter $\mu$ is chosen in $\mathcal P=[0.5, 1]$ and $\mathcal P$ is endowed with the uniform measure.

We now choose a spatial discretization step $\Delta x>0$ and a time discretization step $\Delta t>0$, and we introduce our discrete unknown $u=(u_i^n)_{i=0,\ldots,N_x;n=0,\ldots,N_t}$ where
\[ N_x=\frac{1}{\Delta x}, \;\;\;\text{and}\;\;\; N_t=\frac{1}{\Delta t}. \]
We note here that the considered PDE is hyperbolic and time-dependent, and that we perform the reduction on the space-time unknown $u$, of dimension $(N_x+1)\cdot(N_t+1)$. This is different from reducing the space-discretized equation at each time step.

The $u$ vector satisfies the discretized initial-boundary conditions:
\begin{equation}\label{e:c1} \forall i, \;\; u_i^0 = (i\Delta x)(1-i\Delta x)\end{equation} 
\begin{equation}\label{e:c2} \forall n, \;\; u_0^n = 0 \end{equation}
and the first-order upwind scheme implicit relation:
\begin{equation}\label{e:c3} \forall i, n \;\; \frac{u_{i+1}^{n+1}-u_{i+1}^n}{\Delta t} + \mu \frac{u_{i+1}^{n+1}-u_{i}^{n+1}}{\Delta x}=
\sin(i\Delta x) \exp(-i \Delta x).\end{equation}
\remove{Let's} \add{Let us} denote by $B=B(\mu)$ (resp. $y$) the matrix (resp. the vector) so that \eqref{e:c1},\eqref{e:c2} and \eqref{e:c3} are equivalent to:
\begin{equation}\label{e:c4} B u = y \end{equation}
that is:
\begin{equation}\label{e:c5} B^T B u = B^T y, \end{equation}
so that equation \eqref{e:c5} is \eqref{e:refprob} with $A(\mu)=B^T B$ and $f=B^T y$.

The output of interest is: $s(\mu)=u_{N_x}^{N_t}$. In the following, we take $\Delta t=0.02$ and $\Delta x=0.05$. As in the previous example, the true stability constants are computed for the dual-based error bound.

\subsection{Results}
\subsubsection{Comparison of the bounds}
We took a very low risk level $\alpha=0.0001$, a snapshot size of 200, $N=20$ retained $\widehat\phi_i^G$ vectors and $K=1$. The results (Figure \ref{f:2}) show that, once again, the error bounds we propose in this paper outperforms the dual-based error bound.

\begin{figure}
\begin{center}
\includegraphics[width=13cm]{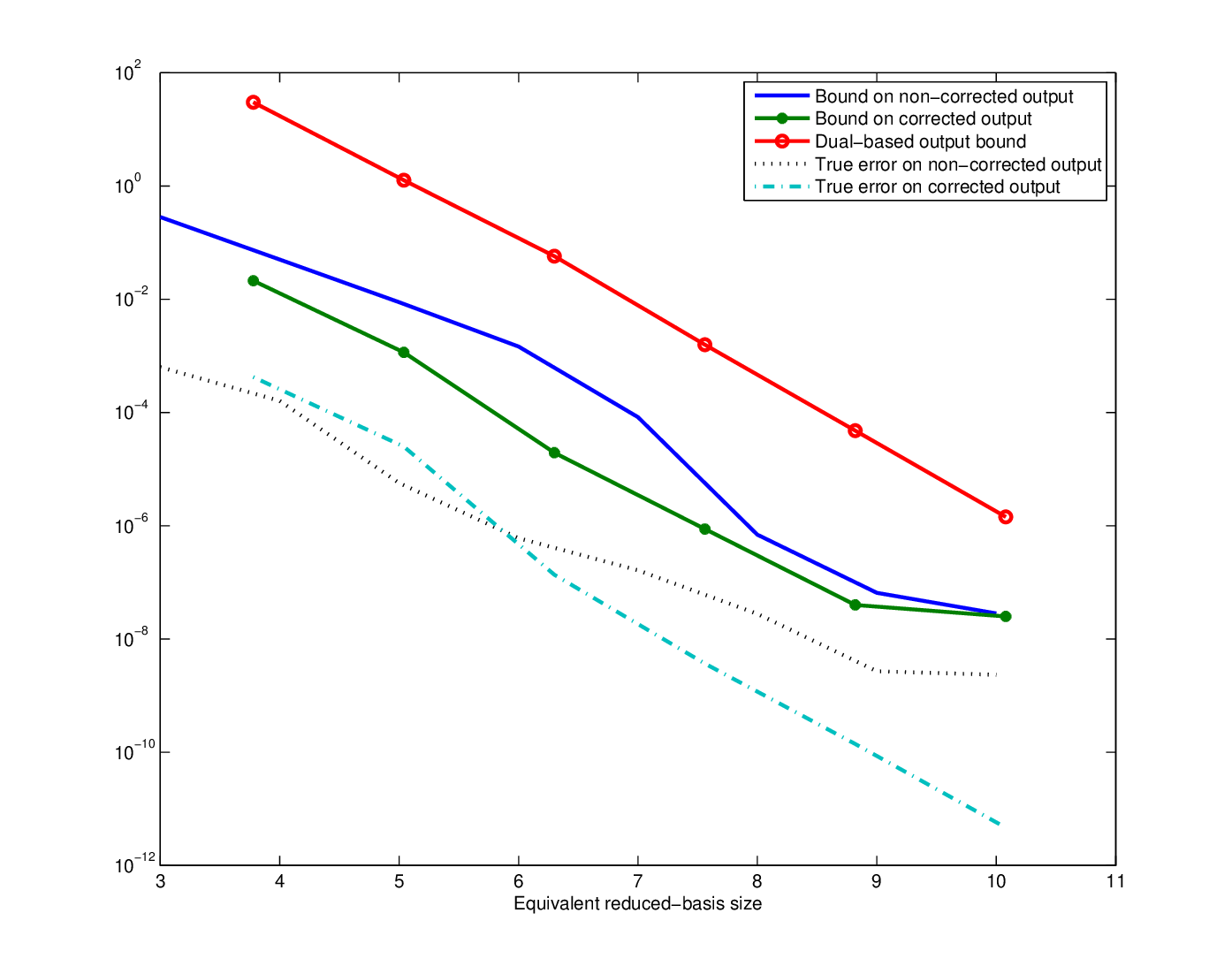}
\caption{{\small Comparison between the mean error bound (corr. and non-corr. outputs), the mean dual-based error bound, and the true (corr. and non-corr.) errors, for different reduced basis sizes.}}
\label{f:2} 
\end{center}
\end{figure}

\subsubsection{Choice of basis}
For the comparison to be fair, we have also checked that using a POD basis does not penalize the dual-based error bound ($\epsilon^{cc}$), by comparing the performance of this error bound when using a POD basis and a so-called ``Greedy'' \cite{buffa2009apriori} procedure. This procedure has a smaller offline cost than the POD, as it requires less resolutions of the reference problem \eqref{e:refprob}. The results, shown in Figure \ref{f:3}, show that the Greedy procedure yields to  inferior performance.  

\begin{figure}
\begin{center}
\includegraphics[width=6cm]{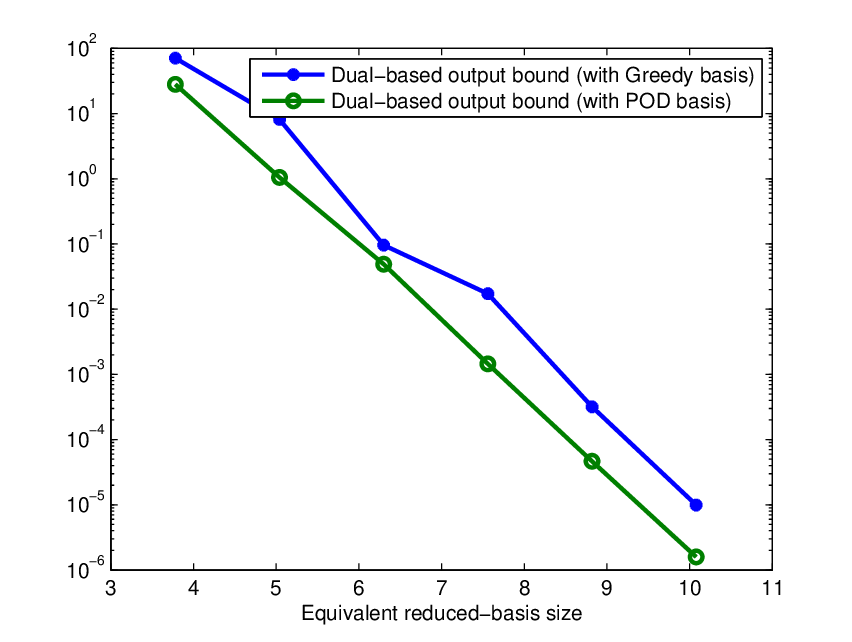}
\caption{Mean of dual-based error bound, when using a POD or a Greedy basis selection procedure, for different reduced basis sizes. }
\label{f:3} 
\end{center}
\end{figure}

\subsubsection{Coverage of the error bound}
 We have also estimated the actual error risk by doing computations of the error bound (on the corrected output), together with the actual error on a random sample of parameters of size $20000$, using $10$-sized bases for primal and dual problem. On this random sample, the estimated error bound is always greater than the true error; hence the error risk estimation appears conservative. 

\section*{Conclusion}
We have presented a new explicitly computable output error bound for the reduced-basis method. We have shown, on two different practical examples, that this bound is clearly better than the naive Lipschitz bound and that, at the expense of a slight, controllable risk, the performances of this new bound are better than the  ones of the existing dual-based output error bound.

\paragraph{Acknowledgements. } This work has been partially supported by the French National Research Agency (ANR) through COSINUS program (project COSTA-BRAVA nr. ANR-09-COSI-015). We thank Anthony Nouy (\'Ecole Centrale de Nantes) and Yvon Maday (Universit\'e Paris 6) for fruitful discussions, and the two anonymous referees for their pertinent remarks, which have greatly improved the quality of the paper.

\appendix
\section{Estimation of $T_2(N,\Phi)$: error analysis}
\label{appendix}
We now assess the error of estimation of $T_2(N,\Phi)$, and its consequence on the risk of the error bound $\widehat\epsilon(\mu,\alpha,N,\Phi)$.

First, by repeating the proof of Theorem \ref{t:1}, and by replacing (for the application of Markov inequality) $T_2(N,\Phi)/\alpha$ by $\widehat T_2(N,\Phi)/\alpha$, one gets
\[ P\left(\abs{s(\mu)-\widetilde s(\mu)} > T_1\Dddmu + \frac{\widehat T_2(N,\Phi)}{\alpha} \right) \leq \alpha \frac{T_2(N,\Phi)}{\widehat T_2(N,\Phi)}. \]
If $T_2(N,\Phi)=0$, we have that 
\[ P\left(\abs{s(\mu)-\widetilde s(\mu)} > T_1\Dddmu + \frac{\widehat T_2(N,\Phi)}{\alpha} \right) = 0, \]
hence the computable error bound has zero risk. So we can assume that $T_2(N,\Phi) \neq 0$.

The error on the risk of $\widehat\epsilon(\mu,\alpha,N,\Phi)$ is not easily attainable, because the same $\Xi$ set of parameters is used to choose $\Phi$ and to compute $\widehat T_2(N,\Phi)$, leading to probabilistic dependence in the family:
\[ \left\{ \left| \sum_{i=N+1}^{\mathcal N} \pscal{w(\mu),\Phi_i}\pscal{r(\mu),\Phi_i} \right| ; \mu\in\Xi\right\}. \]
However, one can take another random sample $\Xi' \subset \mathcal P$ of parameters, independent of $\Xi$, with size $M=\#\Xi'$, and define $\widehat T_2'(N,\Phi)$, the following estimator of $T_2(N,\Phi)$:
\[ \widehat T_2'(N,\Phi) = \frac 1 M \sum_{\mu\in\Xi'} \left| \sum_{i=N+1}^{\mathcal N} \pscal{w(\mu),\Phi_i}\pscal{r(\mu),\Phi_i} \right|, \]
which gives in turn another error bound:
\[ \widehat\epsilon'(\mu,\alpha,N,\Phi)=T_1\Dddmu+\frac{\widehat T_2'(N,\Phi)}{\alpha}, \]
which is also computable in practice, but requires more solutions of the reference problem \ref{e:refprob} during the offline phase.

Notice that $\widehat T_2'(N,\Phi)$ is a random variable with respect to the probability measure used to sample the $\Xi'$ set. We denote by $P'$ this probability measure. We denote by $\rho$ the risk majorant of the computable error bound:
\[ \rho = \alpha \frac{T_2(N,\Phi)}{\widehat T_2'(N,\Phi)}, \]
which is also a random variable with respect to $P'$.

We have the following theorem:
\begin{theorem}
Let \[ B(\mu) = \left| \sum_{i=N+1}^{\mathcal N} \pscal{w(\mu),\Phi_i}\pscal{r(\mu),\Phi_i} \right|, \;\; \sigma^2 = \Var_{P'}(B(\mu)). \]
We have:
\[ \sqrt M \left( \rho - \alpha \right)  \underset{P'}{\longrightarrow} \mathcal N\left(0, \alpha^2 \frac{\sigma^2}{T_2^2} \right) \]
where $ \underset{P'}{\longrightarrow} $ denotes convergence in $P'$-distribution when $M \rightarrow + \infty$ and $\mathcal N(0,\sigma)$ is the centered gaussian distribution with variance $\sigma^2$.
\end{theorem}
Hence $\rho$ converges (in a probabilistic sense) to $\alpha$ with rate $1/\sqrt M$.
\begin{proof}
We have:
\[ T_2(N,\Phi) = \E_{P'}(B(\mu)), \;\; \widehat T_2' = \frac{1}{M} \sum_{\mu\in\Xi'} B(\mu). \]
Hence, by the central limit theorem \cite{van2000asymptotic},
\[ \sqrt M (\widehat T_2 - T_2) \underset{P'}{\longrightarrow} \mathcal N(0, \sigma)  \]
Now we define $g(x) = \alpha T_2(N,\Phi)/x$. As $T_2(N,\Phi) \neq 0$, $g$ is differentiable in $T_2(N,\Phi)$ and one can use the Delta method \cite{van2000asymptotic} to write:
\[ \sqrt M \left( g(\widehat T_2'(N,\Phi)) - g(T_2(N,\Phi)) \right)  \underset{P'}{\longrightarrow} \mathcal N\left(0, \alpha^2 \frac{\sigma^2}{T_2^2} \right), \]
which proves the theorem.
\end{proof}

\bibliographystyle{plain}
\bibliography{biblio}

\end{document}